 \newtheorem{thm}{Theorem}[section]
 \theoremstyle{definition}
 \theoremstyle{remark}
 \numberwithin{equation}{section}
\begin{document}

%
%
%
%
%
%
%
%
%

\title[DUAL HORADAM OCTONIONS]
 {DUAL HORADAM OCTONIONS}

\author[Serp\.{i}l HALICI]{Serp\.{i}l HALICI}

\address{%
Pamukkale University,\\
Faculty of Arts and Sciences,\\
Department of Mathematics,\\
Denizli/TURKEY}

\email{shalici@pau.edu.tr}

\author[Adnan KARATA\c{S}]{Adnan KARATA\c{S}}

\address{%
Pamukkale University,\\
Faculty of Arts and Sciences,\\
Department of Mathematics,\\
Denizli/TURKEY}

\email{adnank@pau.edu.tr}

\subjclass{11B39, 17A20}

\keywords{Fibonacci Numbers and Generalization, Octonions}

\date{Feb, 2017}

\begin{abstract}
In this study, we investigate Horadam sequence as generalization of linear recurrence equations of order two. By the aid of this sequence we obtain a new generalization for sequences of dual quaternions and dual octonions. Moreover, we derive some important identities such as Binet formula, generating function, Cassini identity, sum formula and norm formula by their Binet forms. We generalize all studied linear second order recurrence relations over dual octonions and quaternions in a single formula.
\end{abstract}

\maketitle
\vspace{1.5 cm}
\section{Introduction}

W.R. Hamilton introduced quaternions as an extension of complex numbers. According to Hamilton any quaternion $q$ is a hyper-complex number represented by an equation
$$ q= a_{0}e_{0}+ a_{1}e_{1}+ a_{2}e_{2}+ a_{3}e_{3}$$ where $ a_{0}, a_{1}, a_{2},  a_{3}\in  \mathbb{R}$  and $ \{ e_{0}, e_{1}, e_{2},e_{3} $ which forms a standard orthonormal basis in $\mathbb{R}^{4} $. Horadam, in \cite{Hor2}, defined Fibonacci and Lucas quaternions as follows.\\
$$ QF_{n} = F_{n}e_{0}+ F_{n+1}e_{1} + F_{n+2}e_{2}+ F_{n+3}e_{3} $$
$$ QL_{n} = L_{n}e_{0}+ L_{n+1}e_{1} + L_{n+2}e_{2}+ L_{n+3}e_{3} $$  \\ where $ F_{n}, L_{n}$ denote $n$-$th$ Fibonacci and Lucas numbers, respectively. In \cite{Hor4}, the author introduced the sequence $\{w_n (a, b; p, q) \}$ and gave its basic properties. Also, in \cite{Hor6} he examined some of the fundamental properties of $w_n$. Swamy \cite{Swamy} defined the generalization of Fibonacci quaternions. Halici, in \cite{Hal1, Hal2} , investigated Fibonacci quaternions and complex Fibonacci quaternions. The author derived some identities including Binet formula, generating functions and binomial sums involving these quaternions. Polatli and Kesim \cite{Polatli}, studied the quaternions with the generalized Fibonacci and Lucas number components. Since a dual quaternion is usually described as a quaternion with dual numbers as coefficient, dual Fibonacci quaternions and octonions can be defined in a similar way. That is a dual quaternion $\hat{P} $ can be written as
$$\hat{P}=p + q \epsilon;\,\ \,\ \,\ \epsilon^2=0,\,\ \,\ \epsilon \neq 0 \mbox{  and  } p, q \in \mathbb{H}.$$
Addition operation on the dual quaternions is component-wise and multiplication operation is as follows
$$\hat{P} \hat{Q} = (p,q)(r,s)=(p r, p s+q r)=p r+ (p s +q r) \epsilon.$$
Like dual quaternions, dual octonions are also useful tool for geometry \cite{H.K.}and electromagnetism \cite{Chanyal}. Reader who wants to make further reading about dual quaternions in geometry and programming can refer to \cite{Wang, Dani}.\\
Now let us give some fundamental properties of dual octonions. Any dual octonion can be defined as $\hat{K}=(k, l)=k+l \epsilon $ where $ k, l \in \mathbb{O} \mbox{  with  } \epsilon^2 =0.$
Let $\mathbb{D}$ be the set of dual octonions;
$$\mathbb{D}=\{\hat{K}|\hat{K}=(k,l)=k+l \epsilon; \,\ \,\ k,l \in \mathbb{O} \mbox{  with  } \epsilon^2 =0 \}.$$
Addition operation on $\mathbb{D}$ is
$$\hat{K}+\hat{L}=(k,l)+(m,n)=(k+m,l+n)=(k+m)+(l+n)\epsilon .$$
To indicate multiplication of two octonions we will use the notation $\circ$. As it is well-known there are different multiplication tables for octonion multiplication (see, \cite{Baez}). We use the multiplication rules according to following sets and to determine indices $\{ i, j, k \} $ we can choose from followings $ \{ 1, 2, 3 \},$ $\{ 1, 4, 5 \},$ $ \{ 1, 7, 6 \},$ $ \{ 2, 4, 6 \},$ $\{ 2, 5, 7 \},$ $\{ 3, 4, 7 \}$ and $\{ 3, 6, 5 \}$.
$$e_i \circ e_j= e_k.$$
We note that if one chooses different multiplication table then should make all calculations accordingly and most likely get different results from our results. Analogously, multiplication of two dual octonions is as follows
$$\widehat{K} \widehat{L}=(k \circ m, k \circ n + l \circ m ).$$
Conjugate of dual octonion $\widehat{K}$ can be demonstrated as $\overline{\widehat{K}}$. And it can be defined as follows
$$\overline{\widehat{K}}=(\overline{k},\overline{l})=\overline{k} +\overline{l} \epsilon .$$
Norm is defined by the help of conjugate as follows
$$N(\widehat{K})= (k,l)(\overline{k},\overline{l})=(\overline{k},\overline{l})(k,l)=(k \circ \overline{k},k\circ \overline{l}+ l \circ \overline{k} ).$$
It should be noted that, definition of dual unit $\epsilon$ causes norm to be degenerate which means that the norm can be equal to zero for some non zero dual octonions. To explain this situation let us calculate the norm for $\widehat{K}=1\epsilon$;
$$N(\widehat{K})= 1 \epsilon \overline{1} \epsilon = 1 \epsilon^2=0.$$
Thus, this example shows the norm is degenerate. For more details, we refer the interested reader to \cite{Baez} for octonions and \cite{Conway} for norms.\\\\
In this study, we focus on the second order recursive relations over dual octonions. There are some papers about this subject in literature. In \cite{S.H}, the author studied dual Fibonacci octonions. In  \cite{zafer}, the author investigated dual Fibonacci and dual Lucas octonions. Our aim is to generalize all of second order recursive relations over dual octonions. To achieve this goal firstly we will give some properties of Horadam sequence in section $2$, and then we will introduce dual Horadam octonions in section $3$ and finally we will conclude this paper in section $4$.
%
%
%
%
\section{\textbf{HORADAM SEQUENCE}}

Horadam sequence is a particular type of linear recurrence sequences. This sequence generalizes some of the well-known sequences such as Fibonacci, Lucas, Pell, Pell-Lucas sequences, etc.. There are many studies in the literature concerned about the Horadam sequence (see for instance \cite{Hauk, Hor6, Hor4}).\\\\
Now we present some formulas that are used subsequently. Let us start with the definition of Horadam numbers. In \cite{Hor6}, Horadam defined the following sequence.
\begin{equation} \{ w_n(a,b;p,q) \}; w_n = p w_{n-1} + q w_{n-2}; w_0=a, w_1=b, \,\ \,\  (n \geq 2). \end{equation}
In the equation $(2.1)$ if we take $a=0, b=p=q=1$, then we get Fibonacci numbers. Likewise Lucas, Pell and other sequences can be obtained.\\\\
Some needed properties of Horadam numbers will be given below.
One of very important properties about recurrence relations is Binet formula and it can be calculated using its characteristic equation. The roots of characteristic equation are
\begin{equation} \alpha = \frac{p+\sqrt{p^2+4q}}{2}, \,\ \,\ \beta= \frac{p-\sqrt{p^2+4q}}{2}. \end{equation}
Using these roots and the recurrence relation of Horadam numbers, Binet formula can be given as follows
\begin{equation} w_n=\frac{A\alpha^n-B\beta^n}{\alpha - \beta}, \,\ \,\ \,\ A=b-a\beta \mbox{ and } B=b-a\alpha.\end{equation}
The generating function for Horadam numbers is
\begin{equation} g(t)=\frac{w_0+(w_1-pw_0)t}{1-pt-qt^2}. \end{equation}
The Cassini identity for Horadam numbers is
\begin{equation} w_{n+1}w_{n-1}-w^2_n=q^{n-1}(pw_0 w_1- w_1^2-w_0^2q). \end{equation}
Moreover, a summation formula for Horadam numbers is
\begin{equation} \sum_{i=0}^n{w_i}=\frac{w_1-w_0 (p-1)+qw_n - w_{n+1}}{1-p-q}.\end{equation}
These equalities will be our base point and then we will investigate dual Horadam octonions accordingly. There are some studies about second order recurrence relations over $\mathbb{R, C, H}$ and $\mathbb{O}$ in literature. One of the earliest studies on this topic belong to Horadam where it was given a generalization of second order recurrence relations on $\mathbb{R, C}$. Swamy defined the generalized Fibonacci quaternions and then Halici defined quaternions with coefficients from Horadam sequence \cite{Halici}. Also, in \cite{Kar.Hal.}, Karata\c{s} and Halici defined Horadam octonions. In addition they gave the Binet formula and some identities. Especially, in \cite{Aslan, zafer, S.H}, authors investigated the second order recurrence relations on dual structures.\\\\
%
%
%
\section{\textbf{DUAL HORADAM OCTONIONS}}
In \cite{Kar.Hal.}, we defined and investigated Horadam octonions. Moreover, we derived Binet formula, generating function, Cassini identities, sum and norm formulas for Horadam octonions.\\\\
In this section, we define dual Horadam octonions and analogously give mentioned identities and equalities. Also, we show identities and equalities for dual Horadam octonions that can be reduced to identities and equalities in \cite{S.H} and \cite{zafer} via proper initial values.\\\\
Dual Horadam octonions are dual octonions with coefficients from Horadam sequence. Let $\widehat{\mathbb{O}G_{n}}$ be $n$-$th$ dual Horadam octonion. We define this type of octonion as
$$\widehat{\mathbb{O}G_{n}}=(\mathbb{O}G_{n}, \mathbb{O}G_{n+1})=\mathbb{O}G_{n}+\mathbb{O}G_{n+1} \epsilon$$
where $$\mathbb{O}G_{n}=w_n e_0+ w_{n+1} e_1 + \dots + w_{n+7} e_7. $$
Now, let us examine some properties of dual Horadam octonions. The following theorem gives us the Binet formula for dual Horadam octonions.
\begin{thm}
For $n\geq 1$, we get  \\
\begin{equation}\widehat{\mathbb{O}G_{n}}=\frac{A\underline{\alpha}\alpha^n(1+\alpha \epsilon)-B\underline{\beta}\beta^n(1+\beta \epsilon)}{\alpha - \beta}, \end{equation} \\
where $\underline{\alpha}=1e_0 + \alpha e_1+ \alpha^2 e_2+ \dots + \alpha^7 e_7$ and $\underline{\beta}=1e_0 + \beta e_1+ \beta^2 e_2+ \dots + \beta^7 e_7.$
\end{thm}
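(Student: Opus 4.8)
The plan is to derive the Binet formula for $\widehat{\mathbb{O}G_n}$ directly from the definition together with the scalar Binet formula $(2.3)$. First I would expand the octonionic component $\mathbb{O}G_n = \sum_{k=0}^{7} w_{n+k} e_k$ and substitute $w_{n+k} = \dfrac{A\alpha^{n+k} - B\beta^{n+k}}{\alpha - \beta}$. Pulling the common factors $\alpha^n$ and $\beta^n$ out of the sum, the coefficient of $A\alpha^n/(\alpha-\beta)$ becomes $\sum_{k=0}^{7}\alpha^k e_k = \underline{\alpha}$, and similarly the coefficient of $B\beta^n/(\alpha-\beta)$ is $\underline{\beta}$. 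Hence
\begin{equation}
\mathbb{O}G_n = \frac{A\underline{\alpha}\alpha^n - B\underline{\beta}\beta^n}{\alpha - \beta}.
\end{equation}

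Next I would apply the same reasoning to $\mathbb{O}G_{n+1}$, obtaining $\mathbb{O}G_{n+1} = \dfrac{A\underline{\alpha}\alpha^{n+1} - B\underline{\beta}\beta^{n+1}}{\alpha - \beta}$. Then I assemble the dual octonion using its definition $\widehat{\mathbb{O}G_n} = \mathbb{O}G_n + \mathbb{O}G_{n+1}\epsilon$:
\begin{equation}
\widehat{\mathbb{O}G_n} = \frac{A\underline{\alpha}\alpha^n - B\underline{\beta}\beta^n}{\alpha-\beta} + \frac{A\underline{\alpha}\alpha^{n+1} - B\underline{\beta}\beta^{n+1}}{\alpha-\beta}\,\epsilon.
\end{equation}
Factoring $A\underline{\alpha}\alpha^n$ from the first pair of terms and $B\underline{\beta}\beta^n$ from the second pair gives $\alpha^n(1 + \alpha\epsilon)$ and $\beta^n(1+\beta\epsilon)$ respectively, which is exactly $(3.1)$.

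One technical point I would be careful about is the order of multiplication: since $\mathbb{O}$ is noncommutative (and nonassociative), I must keep $\underline{\alpha}$ and $\underline{\beta}$ on the same side throughout, but because $\alpha$, $\beta$, $A$, $B$ are real scalars they commute freely with the octonion units, so no reordering issue actually arises; the scalar $\epsilon$ likewise commutes with real scalars and with the $e_k$. I should also note that the formula as stated presupposes $\alpha \neq \beta$, i.e. $p^2 + 4q \neq 0$, which is the standing assumption inherited from $(2.2)$–$(2.3)$. The main obstacle, such as it is, is purely bookkeeping: making sure the index shift by one in the dual part lines up correctly so that the factored forms $(1+\alpha\epsilon)$ and $(1+\beta\epsilon)$ emerge cleanly; there is no conceptual difficulty, since everything reduces to the scalar identity $(2.3)$ applied componentwise.
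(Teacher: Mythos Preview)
Your proposal is correct and follows essentially the same route as the paper: first establish the octonionic Binet formula $\mathbb{O}G_n=(A\underline{\alpha}\alpha^n-B\underline{\beta}\beta^n)/(\alpha-\beta)$, then plug into the definition $\widehat{\mathbb{O}G_n}=\mathbb{O}G_n+\mathbb{O}G_{n+1}\epsilon$ and factor. If anything, your componentwise substitution of $(2.3)$ into $\sum_{k=0}^{7}w_{n+k}e_k$ is more explicit than the paper's own derivation of the intermediate identity, and your remarks on commutativity of the real scalars with the octonion units and on the hypothesis $\alpha\neq\beta$ are welcome clarifications.
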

\begin{proof}
For these octonions Binet formula can be calculated similar to Binet formula for Horadam numbers. By using characteristic equation
\begin{equation}t^2-pt-q=0.\end{equation}
The roots of characteristic equation are
\begin{equation} \alpha = \frac{p+\sqrt{p^2+4q}}{2}, \,\ \,\ \beta= \frac{p-\sqrt{p^2+4q}}{2}. \end{equation}
Using these roots and the recurrence relation we write Binet formula as follows.
\begin{equation}\mathbb{O}G_{n}=\frac{A\underline{\alpha}\alpha^n-B\underline{\beta}\beta^n}{\alpha - \beta}\end{equation}
where
\begin{equation}\underline{\alpha}=1e_0 + \alpha e_1+ \alpha^2 e_2+ \dots + \alpha^7 e_7\mbox{ and }\underline{\beta}=1e_0 + \beta e_1+ \beta^2 e_2+ \dots + \beta^7 e_7.\end{equation}
When using the definition of dual Horadam octonions we have
$$\widehat{\mathbb{O}G_{n}}=\frac{A\underline{\alpha}\alpha^n(1+\alpha \epsilon)-B\underline{\beta}\beta^n(1+\beta \epsilon)}{\alpha - \beta}$$
which completes the proof.
\end{proof}
In \cite{zafer}, Binet formula for dual Fibonacci octonions is given and as follows
$$ \frac{\underline{\alpha}\alpha^n(1+\alpha \epsilon)-\underline{\beta}\beta^n(1+\beta \epsilon)}{\alpha - \beta}.$$
By the aid of necessary initial values it can be seen that equation $(3.1)$ equals to Binet formula in \cite{zafer}. This fact shows our Binet formula is a generalization of Binet formulas in \cite{S.H, zafer}.\\\\
In the next theorem we consider the generating function.
\begin{thm}
The generating function for dual Horadam octonions is \\
\begin{equation}\frac{\widehat{\mathbb{O}G_{0}}+(\widehat{\mathbb{O}G_{1}}-p\widehat{\mathbb{O}G_{0}})t}{1-pt-qt^2}.\end{equation}
\end{thm}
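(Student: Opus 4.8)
The plan is to mimic the standard derivation of a generating function for a linear recurrence, but carried out in the (non-commutative, non-associative) ring of dual octonions. I define $g(t)=\sum_{n=0}^{\infty}\widehat{\mathbb{O}G_{n}}\,t^{n}$ as a formal power series with dual-octonion coefficients, and the goal is to show $(1-pt-qt^{2})g(t)=\widehat{\mathbb{O}G_{0}}+(\widehat{\mathbb{O}G_{1}}-p\widehat{\mathbb{O}G_{0}})t$, from which the claimed closed form follows by dividing by the scalar polynomial $1-pt-qt^{2}$ (division by a scalar series is unproblematic since scalars are central).

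The key steps, in order: First I would write out the three series
\begin{equation*}
g(t)=\sum_{n\geq 0}\widehat{\mathbb{O}G_{n}}t^{n},\quad pt\,g(t)=\sum_{n\geq 1}p\widehat{\mathbb{O}G_{n-1}}t^{n},\quad qt^{2}g(t)=\sum_{n\geq 2}q\widehat{\mathbb{O}G_{n-2}}t^{n},
\end{equation*}
and subtract the latter two from the first. Second, I would observe that because each component sequence $w_{n+j}$ of $\mathbb{O}G_{n}$ satisfies $w_{n}=pw_{n-1}+qw_{n-2}$, the dual Horadam octonions inherit the same recurrence: $\widehat{\mathbb{O}G_{n}}=p\widehat{\mathbb{O}G_{n-1}}+q\widehat{\mathbb{O}G_{n-2}}$ for $n\geq 2$. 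Hence all coefficients of $t^{n}$ for $n\geq 2$ cancel. Third, I collect the surviving low-order terms: the constant term is $\widehat{\mathbb{O}G_{0}}$ and the coefficient of $t$ is $\widehat{\mathbb{O}G_{1}}-p\widehat{\mathbb{O}G_{0}}$. This gives $(1-pt-qt^{2})g(t)=\widehat{\mathbb{O}G_{0}}+(\widehat{\mathbb{O}G_{1}}-p\widehat{\mathbb{O}G_{0}})t$, and solving for $g(t)$ yields (3.7).

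The main thing to be careful about — rather than a deep obstacle — is that $p,q,t$ are real scalars and therefore central in the dual octonion algebra, so multiplying the recurrence through by $t^{n}$ and by $p$ or $q$ on the left causes no trouble despite non-associativity; the recurrence relation itself only involves scalar multiples of the $\widehat{\mathbb{O}G_{k}}$, so no octonion products ever need to be reassociated. One should also note that this is a formal identity of power series (or, equivalently, valid wherever the series converges), exactly as in equation (2.4) for the scalar Horadam case; I would state it at that level of rigor and not dwell on convergence. Alternatively, and even more economically, one can just invoke (2.4) componentwise: each of the eight dual-octonion components of $\widehat{\mathbb{O}G_{n}}$ is itself a Horadam-type sequence, so its generating function is given by (2.4) with the appropriate shifted initial data, and assembling the eight scalar generating functions (with the $\epsilon$ part bookkeeping included) reproduces (3.7). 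I would present the direct series-cancellation argument as the primary proof since it is self-contained and transparent.
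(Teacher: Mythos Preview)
Your proposal is correct and follows essentially the same approach as the paper: define $g(t)$ as the formal power series, form $ptg(t)$ and $qt^{2}g(t)$, and use the inherited recurrence to cancel all coefficients of $t^{n}$ for $n\ge 2$, leaving exactly the two surviving low-order terms. Your write-up is in fact more explicit than the paper's (which simply says ``if we made necessary calculations''), and your remarks on centrality of scalars and the componentwise alternative are sound extras.
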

\begin{proof}
To prove this claim, firstly, we need to write generating function for dual Horadam octonions;
\begin{equation}g(t)=\widehat{\mathbb{O}G_{0}}t^0+ \widehat{\mathbb{O}G_{1}}t + \dots+ \widehat{\mathbb{O}G_{n}}t^n+\dots \end{equation} \\
Secondly, we need to calculate $ptg(t)$ and $qt^2g(t)$ as the following equations;\\
\begin{equation}ptg(t)=\sum^\infty_{n=0}{p\widehat{\mathbb{O}G_{n}}t^{n+1}} \mbox{ and } qt^2g(t)=\sum^\infty_{n=0}{q\widehat{\mathbb{O}G_{n}}t^{n+2}}. \end{equation}\\
Finally, if we made necessary calculations, then we have \\
\begin{equation}g(t)=\frac{\mathbb{O}G_{0}+\mathbb{O}G_{1} \epsilon+(\mathbb{O}G_{1} + \mathbb{O}G_{2} \epsilon - p ( \mathbb{O}G_{0} + \mathbb{O}G_{1} \epsilon))t}{1-pt-qt^2}\end{equation}which is the generating function for dual Horadam octonions.
\end{proof}
In \cite{S.H}, the author gave generating function of dual Fibonacci octonions as follows.
$$ \frac{\widehat{\mathbb{O}_{0}} + (\widehat{\mathbb{O}_{1}} -  \widehat{\mathbb{O}_{0}})t}{1-t-t^2}.$$
If we choose the values as $a=0, b=p=q=1$ in the equation $(3.9)$ we get the generating function of dual Fibonacci octonions.
\begin{thm}
For dual Horadam octonions the Cassini formula is 
\begin{equation} \widehat{\mathbb{O}G_{n-1}} \widehat{\mathbb{O}G_{n+1}}-\widehat{\mathbb{O}G^2_{n}} =(c_1, c_2)\end{equation}
where
$$c_1 = \frac{AB(\alpha \beta)^{n-1}(\beta \underline{\alpha}\underline{\beta}-\alpha \underline{\beta}\underline{\alpha})}{\alpha-\beta}$$
and
$$c_2 = \frac{A^2 {\underline{\alpha}}^2 \alpha^{2n} (\alpha -1)^2 + B^2 \underline{\beta}^2 \beta^{2n} (\beta - 1 )^2}{(\alpha-\beta)^2} $$
$$-AB(\alpha \beta)^{n-1}((\beta^2 + \alpha \beta^3 - \alpha \beta^2 - \alpha^2 \beta)\underline{\alpha}\underline{\beta} +(\alpha^2 + \alpha^3 \beta - \alpha^2 \beta - \alpha \beta^2)\underline{\beta}\underline{\alpha}).$$
\end{thm}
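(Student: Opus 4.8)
The plan is to read the identity straight off the Binet formula of Theorem 3.1, treating the dual unit $\epsilon$ as central with $\epsilon^{2}=0$ and treating all the scalars $\alpha^{m},\beta^{m},(1+\alpha\epsilon),(1+\beta\epsilon)$ as central, while keeping the two octonions $\underline{\alpha}$ and $\underline{\beta}$ in exactly the order in which they occur. Every octonion product that arises has only two octonion factors (all scalar factors being pulled out first), so by alternativity no associativity question comes up; the only genuine subtlety is that $\underline{\alpha}\,\underline{\beta}\neq\underline{\beta}\,\underline{\alpha}$, so these two monomials must be carried separately throughout.

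Concretely, I would substitute $\widehat{\mathbb{O}G_{m}}=\dfrac{A\underline{\alpha}\alpha^{m}(1+\alpha\epsilon)-B\underline{\beta}\beta^{m}(1+\beta\epsilon)}{\alpha-\beta}$ for $m=n-1,n,n+1$ into $\widehat{\mathbb{O}G_{n-1}}\,\widehat{\mathbb{O}G_{n+1}}-\widehat{\mathbb{O}G_{n}}^{2}$ and expand bilinearly over the scalars. Each product splits into four terms with octonion parts $\underline{\alpha}^{2},\ \underline{\alpha}\,\underline{\beta},\ \underline{\beta}\,\underline{\alpha},\ \underline{\beta}^{2}$; in $\widehat{\mathbb{O}G_{n-1}}\widehat{\mathbb{O}G_{n+1}}$ the accompanying scalars are built from $A^{2}\alpha^{2n}(1+\alpha\epsilon)^{2}$, $AB\,\alpha^{n-1}\beta^{n+1}(1+\alpha\epsilon)(1+\beta\epsilon)$, $AB\,\alpha^{n+1}\beta^{n-1}(1+\beta\epsilon)(1+\alpha\epsilon)$, $B^{2}\beta^{2n}(1+\beta\epsilon)^{2}$, all over $(\alpha-\beta)^{2}$, and similarly for $\widehat{\mathbb{O}G_{n}}^{2}$ with $AB\,\alpha^{n}\beta^{n}$ on both mixed monomials. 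Using $(1+\alpha\epsilon)^{2}=1+2\alpha\epsilon$, $(1+\alpha\epsilon)(1+\beta\epsilon)=1+(\alpha+\beta)\epsilon$, together with the relations attached to the characteristic equation $(3.2)$, namely $\alpha+\beta=p$, $\alpha\beta=-q$, $\alpha-\beta=\sqrt{p^{2}+4q}$ (so that $\alpha^{n-1}\beta^{n+1}-\alpha^{n}\beta^{n}=(\alpha\beta)^{n-1}\beta(\beta-\alpha)$ and $\alpha^{n+1}\beta^{n-1}-\alpha^{n}\beta^{n}=(\alpha\beta)^{n-1}\alpha(\alpha-\beta)$), I subtract the two products. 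The $\underline{\alpha}^{2}$ and $\underline{\beta}^{2}$ contributions to the real part cancel, and the mixed terms collapse, after one power of $(\alpha-\beta)$ cancels, to $\dfrac{AB(\alpha\beta)^{n-1}\bigl(\beta\,\underline{\alpha}\,\underline{\beta}-\alpha\,\underline{\beta}\,\underline{\alpha}\bigr)}{\alpha-\beta}$, which is precisely the first coordinate $c_{1}$.

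It then remains to read off the coefficient of $\epsilon$, which is the second coordinate: one collects all contributions carrying a factor of $\epsilon$ — those coming from $(1+\alpha\epsilon)^{2}$ and $(1+\beta\epsilon)^{2}$ as well as from the mixed factors $(1+\alpha\epsilon)(1+\beta\epsilon)$ — and regroups the scalar polynomials in $\alpha,\beta$ multiplying $\underline{\alpha}^{2},\ \underline{\beta}^{2},\ \underline{\alpha}\,\underline{\beta}$ and $\underline{\beta}\,\underline{\alpha}$; applying the Vieta relations once more yields $c_{2}$ and completes the proof. The main obstacle is exactly this last step: because the $\epsilon$-component is genuinely two-sided (both $\underline{\alpha}\,\underline{\beta}$ and $\underline{\beta}\,\underline{\alpha}$ persist, with different scalar coefficients), the scalar Horadam simplification $(2.5)$ does not transcribe verbatim, and the bulk of the work is the patient tracking of the several polynomials in $\alpha,\beta$ that accompany each octonion monomial. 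An alternative is to argue by induction on $n$ using the recurrence $\widehat{\mathbb{O}G_{n+1}}=p\,\widehat{\mathbb{O}G_{n}}+q\,\widehat{\mathbb{O}G_{n-1}}$, which is inherited componentwise from $(2.1)$, but this only relocates the same algebra into the inductive step.
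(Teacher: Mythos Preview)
Your proposal is correct and follows essentially the same route as the paper: substitute the Binet formula of Theorem~3.1 into $\widehat{\mathbb{O}G_{n-1}}\widehat{\mathbb{O}G_{n+1}}-\widehat{\mathbb{O}G_{n}}^{2}$, expand, and separate the real and $\epsilon$-parts. The paper's proof is in fact terser than yours---it writes out the Binet substitution for $c_{1}$, says ``if necessary calculations are made'' to obtain the displayed form, and then dispatches $c_{2}$ with a single sentence (``in a similar way, the dual part $c_{2}$ can be easily obtained'')---so your explicit handling of the four octonion monomials $\underline{\alpha}^{2},\underline{\alpha}\,\underline{\beta},\underline{\beta}\,\underline{\alpha},\underline{\beta}^{2}$, the dual-number identities $(1+\alpha\epsilon)^{2}=1+2\alpha\epsilon$, and the Vieta relations is more detailed than what the paper actually supplies, but it is the same computation.
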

\begin{proof}
Let us use Binet formula to prove equation $(3.10)$ \\
$$c_1= \frac{A\underline{\alpha}\alpha^{n-1}-B\underline{\beta}\beta^{n-1}}{\alpha - \beta} \frac{A\underline{\alpha}\alpha^{n+1}-B\underline{\beta}\beta^{n+1}}{\alpha - \beta}- \bigg( \frac{A\underline{\alpha}\alpha^{n}-B\underline{\beta}\beta^{n}}{\alpha - \beta} \bigg)^2.$$ \\
If necessary calculations are made, we obtain \\
$$c_1=\frac{AB(\alpha \beta)^{n-1}(\beta \underline{\beta}\underline{\alpha}-\alpha \underline{\alpha}\underline{\beta})}{\alpha-\beta}$$ \\
which is desired. In a similar way, the dual part $c_2$ can be easily obtained.
\end{proof}
Cassini identities for dual Fibonacci octonions and dual Lucas octonions are studied in \cite{S.H, zafer}. To obtain Cassini identity for dual Fibonacci octonions one can choose necessary initial values. Thus, one can get the following equation which is equal to Cassini identity in \cite{S.H}. Notice that we show $n$-$th$ dual Fibonacci octonions as $\widehat{\mathbb{O}_{n}}$.
$$\widehat{\mathbb{O}_{n-1}} \,\ \widehat{\mathbb{O}_{n+1}}-\widehat{\mathbb{O}^2_{n}} =(-1)^n (\widehat{\mathbb{O}_{1}^2}-\widehat{\mathbb{O}_{0}^2}-\widehat{\mathbb{O}_{1}}\widehat{\mathbb{O}_{0}}).$$
\begin{thm}
The sum formula for dual Horadam octonions is as follows.\\
\begin{equation} \sum_{i=0}^n{\widehat{\mathbb{O}G_{i}}}= \sum_{i=0}^n{\mathbb{O}G_i}+\sum_{i=1}^{n+1}{\mathbb{O}G_i\epsilon} =(d_1, d_2)  \end{equation}
where
$$d_1 = \sum_{i=0}^n{\mathbb{O}G_i}= \frac{1}{\alpha-\beta}\big( \frac{B\underline{\beta}\beta^{n+1}}{1-\beta}-\frac{A\underline{\alpha}\alpha^{n+1}}{1-\alpha} \big) + K$$
and
$$d_2 = \sum_{i=1}^{n+1}{\mathbb{O}G_i}=\frac{1}{\alpha-\beta}\big( \frac{B\underline{\beta}\beta^{n+2}}{1-\beta}-\frac{A\underline{\alpha}\alpha^{n+2}}{1-\alpha} \big) - \mathbb{O}G_0+ K  $$
where $K$ is \\
$$K=\frac{A\underline{\alpha}(1-\beta)-B\underline{\beta}(1-\alpha)}{(\alpha-\beta)(1-\alpha)(1-\beta)}.$$
\end{thm}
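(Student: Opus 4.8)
The plan is to reduce the statement to a geometric–series computation via the Binet formula of Theorem~3.1. First I would use the defining relation $\widehat{\mathbb{O}G_i}=\mathbb{O}G_i+\mathbb{O}G_{i+1}\epsilon$ to split the sum into its real and dual components,
$$\sum_{i=0}^n\widehat{\mathbb{O}G_i}=\sum_{i=0}^n\mathbb{O}G_i+\Big(\sum_{i=0}^n\mathbb{O}G_{i+1}\Big)\epsilon=\sum_{i=0}^n\mathbb{O}G_i+\sum_{i=1}^{n+1}\mathbb{O}G_i\,\epsilon,$$
which already gives the first equality in (3.11). Hence it suffices to evaluate the partial sums $S_m:=\sum_{i=0}^m\mathbb{O}G_i$ of the Horadam octonions themselves, since then $d_1=S_n$ and $d_2=S_{n+1}-\mathbb{O}G_0$.

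Next I would substitute the Binet form $\mathbb{O}G_i=(A\underline{\alpha}\alpha^i-B\underline{\beta}\beta^i)/(\alpha-\beta)$ from (3.4). Because $\underline{\alpha}$ and $\underline{\beta}$ are fixed octonions multiplied only by the scalars $\alpha^i,\beta^i$, no octonion products appear and the summation passes through linearly:
$$S_m=\frac{1}{\alpha-\beta}\Big(A\underline{\alpha}\sum_{i=0}^m\alpha^i-B\underline{\beta}\sum_{i=0}^m\beta^i\Big).$$
Assuming the usual nondegeneracy hypotheses $\alpha\neq\beta$ and $\alpha,\beta\neq 1$ (the latter being equivalent to $1-p-q\neq 0$, exactly as in the scalar sum formula (2.6)), the geometric summation $\sum_{i=0}^m x^i=(1-x^{m+1})/(1-x)$ yields
$$S_m=\frac{1}{\alpha-\beta}\Big(\frac{B\underline{\beta}\beta^{m+1}}{1-\beta}-\frac{A\underline{\alpha}\alpha^{m+1}}{1-\alpha}\Big)+\frac{1}{\alpha-\beta}\Big(\frac{A\underline{\alpha}}{1-\alpha}-\frac{B\underline{\beta}}{1-\beta}\Big).$$

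Finally I would identify the $m$-independent tail: placing the last two fractions over the common denominator $(1-\alpha)(1-\beta)$ gives $\frac{A\underline{\alpha}(1-\beta)-B\underline{\beta}(1-\alpha)}{(\alpha-\beta)(1-\alpha)(1-\beta)}=K$. Taking $m=n$ produces the stated expression for $d_1$, and taking $m=n+1$ and subtracting $\mathbb{O}G_0$ produces $d_2$. The argument is essentially routine; the only points demanding care are the index shift in the dual part (so that the dual component runs from $1$ to $n+1$ rather than from $0$ to $n$) and the bookkeeping that collapses the two constant terms into the single octonion $K$, and I expect no genuine obstacle beyond this, precisely because the noncommutativity and nonassociativity of $\mathbb{O}$ never enter — everything here is a scalar-weighted linear combination of the two fixed octonions $\underline{\alpha}$ and $\underline{\beta}$.
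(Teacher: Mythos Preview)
Your proposal is correct and follows essentially the same approach as the paper: substitute the Binet form for $\mathbb{O}G_i$, sum the resulting geometric series in $\alpha$ and $\beta$, and then read off $d_1$ and $d_2$. Your write-up is in fact cleaner than the paper's---you parametrize by $m$ to handle both components uniformly, you make explicit the hypothesis $\alpha,\beta\neq 1$, and you note why the octonion nonassociativity is irrelevant---whereas the paper computes $d_1$ directly and simply asserts that $d_2$ follows analogously.
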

\begin{proof}
For the summation formula of dual Horadam octonions we calculate the values $d_1$ and $d_2$, separately. To calculate $d_1$ we can use Binet formula as follows.
$$d_1 = \sum_{i=0}^n{\mathbb{O}G_i}=\frac{A\underline{\alpha}\alpha^{n}-B\underline{\beta}\beta^{n}}{\alpha-\beta}=\frac{A\underline{\alpha}}{\alpha-\beta} \sum_{i=0}^n \alpha^{n}-\frac{B\underline{\beta}}{\alpha-\beta} \sum_{i=0}^n \beta^{n} .$$
Using properties of geometric series we get the following equation.
$$ d_1 = \sum_{i=0}^n{\mathbb{O}G_i}=\frac{A\underline{\alpha}}{\alpha-\beta}\sum_{i=0}^n \frac{1-\alpha^{n+1}}{1-\alpha}-\frac{B\underline{\beta}}{\alpha-\beta} \sum_{i=0}^n \frac{1-\beta^{n+1}}{1-\beta}. $$
After making necessary calculations we will get the explicit form of $d_1$ as
$$\sum_{i=0}^n{\mathbb{O}G_i}=\frac{1}{\alpha-\beta}\big( \frac{B\underline{\beta}\beta^{n+1}}{1-\beta}-\frac{A\underline{\alpha}\alpha^{n+1}}{1-\alpha} \big) +\frac{A\underline{\alpha}(1-\beta)-B\underline{\beta}(1-\alpha)}{(\alpha-\beta)(1-\alpha)(1-\beta)}.$$
From the definition of dual Horadam octonions the value of $d_2$ can be easily calculated.
\end{proof}
In \cite{S.H}, the author gave the sum formula for dual Fibonacci octonions as 
$$ \sum_{i=1}^n{\widehat{\mathbb{O}_{i}}}=\widehat{\mathbb{O}_{2}} F_{n+1} + \widehat{\mathbb{O}_{1}} F_{n-1} - \widehat{\mathbb{O}_{2}}$$
where $F_n$ is the $n$-$th$ Fibonacci number. If we choose $a=0, b=p=q=1$ then sum formula of dual Horadam octonions will be reduced to the sum formula of dual Fibonacci octonions.
\begin{thm}
The norm of $nth$ dual Horadam octonion is
\begin{equation} Nr(\widehat{\mathbb{O}G_{n}})=(\mathbb{O}G_{n} \circ \overline{\mathbb{O}G_{n}},\mathbb{O}G_{n} \circ \overline {\mathbb{O}G_{n+1}} + \mathbb{O}G_{n+1} \circ \overline{\mathbb{O}G_{n}})=(e_1, e_2) \end{equation}
where $e_1, e_2$ and $L$ is
$$e_1 = \frac{A^2\alpha^{2n}(1+\alpha^2+\alpha^4+\dots+\alpha^{14})+B^2\beta^{2n}(1+\beta^2+\beta^4+\dots+\beta^{14})}{(\alpha-\beta)^2}-L,$$
$$e_2 = 2(\sum_{i=0}^7{w_{n+i} w_{n+1+i}})$$
and
$$L=\frac{2AB(-q)^n(a+(-q)+\dots+(-q)^7)}{(\alpha- \beta)^2}.$$

\end{thm}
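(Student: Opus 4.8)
The plan is to reduce everything to two standard facts about $\mathbb{O}$ as a composition algebra, which hold no matter which multiplication table is used and which avoid the non-associativity because they only involve the real trace form: for any $x=\sum_{i=0}^{7}x_ie_i\in\mathbb{O}$ one has $x\circ\overline{x}=\overline{x}\circ x=\bigl(\sum_{i=0}^{7}x_i^{2}\bigr)e_0$, and, by polarization, $x\circ\overline{y}+y\circ\overline{x}=2\bigl(\sum_{i=0}^{7}x_iy_i\bigr)e_0$ for all $x,y\in\mathbb{O}$. First I would record these together with $\overline{\underline{\alpha}}=e_0-\alpha e_1-\dots-\alpha^{7}e_7$, $\overline{\underline{\beta}}=e_0-\beta e_1-\dots-\beta^{7}e_7$, and the identity $\alpha\beta=-q$ coming from $(3.3)$.

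For the dual part $e_2$ no Binet formula is needed: working straight from $\mathbb{O}G_m=\sum_{i=0}^{7}w_{m+i}e_i$, the polarization identity gives
$$\mathbb{O}G_{n}\circ\overline{\mathbb{O}G_{n+1}}+\mathbb{O}G_{n+1}\circ\overline{\mathbb{O}G_{n}}=2\Bigl(\sum_{i=0}^{7}w_{n+i}w_{n+1+i}\Bigr)e_0,$$
which is exactly $e_2$.

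For the real part $e_1$ I would feed the Binet formula $(3.4)$, namely $\mathbb{O}G_{n}=\dfrac{A\underline{\alpha}\alpha^{n}-B\underline{\beta}\beta^{n}}{\alpha-\beta}$, into $\mathbb{O}G_{n}\circ\overline{\mathbb{O}G_{n}}$. Since $A,B,\alpha,\beta\in\mathbb{R}$, conjugation only acts on $\underline{\alpha},\underline{\beta}$, so
$$\mathbb{O}G_{n}\circ\overline{\mathbb{O}G_{n}}=\frac{1}{(\alpha-\beta)^{2}}\Bigl[A^{2}\alpha^{2n}\,\underline{\alpha}\circ\overline{\underline{\alpha}}-AB(\alpha\beta)^{n}\bigl(\underline{\alpha}\circ\overline{\underline{\beta}}+\underline{\beta}\circ\overline{\underline{\alpha}}\bigr)+B^{2}\beta^{2n}\,\underline{\beta}\circ\overline{\underline{\beta}}\Bigr].$$
Now $\underline{\alpha}\circ\overline{\underline{\alpha}}=\bigl(\sum_{i=0}^{7}\alpha^{2i}\bigr)e_0=(1+\alpha^{2}+\dots+\alpha^{14})e_0$, and likewise for $\underline{\beta}$, while $\underline{\alpha}\circ\overline{\underline{\beta}}+\underline{\beta}\circ\overline{\underline{\alpha}}=2\bigl(\sum_{i=0}^{7}(\alpha\beta)^{i}\bigr)e_0$. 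Substituting $\alpha\beta=-q$ turns the middle term into $\dfrac{2AB(-q)^{n}\bigl(1+(-q)+\dots+(-q)^{7}\bigr)}{(\alpha-\beta)^{2}}e_0=L\,e_0$ (so the leading entry of $L$ should read $1=(-q)^{0}$), and collecting the three pieces gives $\mathbb{O}G_{n}\circ\overline{\mathbb{O}G_{n}}=e_1e_0$. Together with the value of $e_2$ and the definition $Nr(\widehat{\mathbb{O}G_{n}})=(\mathbb{O}G_{n}\circ\overline{\mathbb{O}G_{n}},\ \mathbb{O}G_{n}\circ\overline{\mathbb{O}G_{n+1}}+\mathbb{O}G_{n+1}\circ\overline{\mathbb{O}G_{n}})$ this yields $(e_1,e_2)$.

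The only genuine care is the bookkeeping of the geometric sums $\sum_{i=0}^{7}\alpha^{2i}$, $\sum_{i=0}^{7}\beta^{2i}$, $\sum_{i=0}^{7}(-q)^{i}$ and the observation that it is the polarization identity, not a naive term-by-term expansion of the octonion product, that makes the off-diagonal piece $\underline{\alpha}\circ\overline{\underline{\beta}}$ collapse to a real multiple of $e_0$; everything else is routine. The main obstacle, if one wants to call it that, is simply keeping in mind that $\mathbb{O}$ is non-commutative and non-associative, so $\underline{\alpha}\circ\overline{\underline{\beta}}$ by itself is not scalar and the symmetric combination must be kept intact throughout.
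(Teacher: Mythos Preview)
Your argument is correct and follows the same route as the paper: compute $e_1$ by inserting the Binet form of $\mathbb{O}G_n$ and using $\alpha\beta=-q$, and obtain $e_2$ by a direct component calculation. Your version is in fact more explicit than the paper's sketch, since you justify why the cross term $\underline{\alpha}\circ\overline{\underline{\beta}}+\underline{\beta}\circ\overline{\underline{\alpha}}$ is scalar via the polarization identity, and you correctly note that the leading entry in $L$ should be $1=(-q)^0$ rather than $a$.
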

\begin{proof}
It follows from the definition of norm,
$$Nr(\widehat{\mathbb{O}G_{n}})=(\mathbb{O}G_{n} \circ \overline{\mathbb{O}G_{n}},\mathbb{O}G_{n} \circ \overline {\mathbb{O}G_{n+1}} + \mathbb{O}G_{n+1} \circ \overline{\mathbb{O}G_{n}}).$$
To get desired results we firstly calculate $e_1$.
$$e_1=\frac{A^2\alpha^{2n}(1+\alpha^2+\alpha^4+\dots+\alpha^{14})}{(\alpha-\beta)^2} + $$
$$\frac{B^2\beta^{2n}(1+\beta^2+\beta^4+\dots+\beta^{14})}{(\alpha-\beta)^2} - \frac{2AB(-q)^n(a+(-q)+\dots+(-q)^7)}{(\alpha- \beta)^2}.$$
Dual part of equation can be easily calculated because of the property $\epsilon^2=0$. After direct calculations we get 
$$e_2 = \mathbb{O}G_{n} \circ \overline {\mathbb{O}G_{n+1}} + \mathbb{O}G_{n+1} \circ \overline{\mathbb{O}G_{n}}=2(\sum_{i=0}^7{w_{n+i} w_{n+1+i}}). $$
\end{proof}
Norm values of dual Fibonacci and dual Lucas octonions are studied in \cite{S.H, zafer}. Utilizing the equation $(3.12)$ with appropriate initial values one can get the following equation which is identical to the norm value of dual Fibonacci octonions in \cite{S.H}.
$$Nr(\widehat{\mathbb{O}_{n}})=21(F_{2n+7}+2 F_{2n+8} \epsilon).$$
\maketitle
\section{\textbf{Conclusion}}
In this study, we generalize second order recurrence relations on dual octonions which means that this generalization valid over dual octonions, real octonions, dual quaternions, and so on. Moreover, this generalization includes all second order recurrence relations such as Fibonacci, Lucas, Pell, etc. By the aid of this generalization, we give fundamental properties such as Binet formula, generating function, Cassini identity, sum formula and norm formula for mentioned relations.
\bibliographystyle{plain}
\bibliography{mybibfile}

\begin{thebibliography}{10}

\bibitem{Baez}
John Baez.
\newblock The octonions.
\newblock {\em Bulletin of the American Mathematical Society}, 39(2):145--205,
  2002.

\bibitem{Conway}
John~H. Conway and Derek~A. Smith.
\newblock On quaternions and octonions: Their geometry.
\newblock {\em AK Peters, Wellesley, Massachusetts}, 2003.

\bibitem{Dani}
Konstantinos Daniilidis and Eduardo Bayro-Corrochano.
\newblock Dual quaternion synthesis of constrained robotic systems.
\newblock {\em Proceedings of the 13th International Conference on Pattern
  Recognition}, 1, 1996.

\bibitem{Chanyal}
Bhupesh Chandra~Chanyal et~al.
\newblock A new approach on electromagnetism with dual number coefficient
  octonion algebra.
\newblock {\em International Journal of Geometric Methods in Modern Physics},
  13(9):1630013, 2016.

\bibitem{Wang}
Xiangke~Wang et~al.
\newblock The geometric structure of unit dual quaternion with application in
  kinematic control.
\newblock {\em Journal of Mathematical Analysis and Applications},
  389(2):1352--1364, 2012.

\bibitem{Hal1}
Serpil Hal{\i}c{\i}.
\newblock On fibonacci quaternions.
\newblock {\em Advances in Applied Clifford Algebras}, 22(2):321--327, 2012.

\bibitem{Hal2}
Serpil Hal{\i}c{\i}.
\newblock On complex fibonacci quaternions.
\newblock {\em Advances in Applied Clifford Algebras}, 23(1):105--112, 2013.

\bibitem{S.H}
Serpil Hal{\i}c{\i}.
\newblock On dual fibonacci octonions.
\newblock {\em Advances in Applied Clifford Algebras}, 25(4):905--914, 2015.

\bibitem{Halici}
Serpil Hal{\i}c{\i}.
\newblock On a generalization for quaternion sequences.
\newblock {\em arXiv preprint}, arXiv:1611.07660, 2016.

\bibitem{Hauk}
P.~Haukkanen.
\newblock A note on horadam's sequence.
\newblock {\em Fibonacci Quart.}, 40(4):358--361, 2002.

\bibitem{Hor2}
A.~F. Horadam.
\newblock Complex fibonacci numbers and fibonacci quaternions.
\newblock {\em The American Mathematical Monthly}, 70(3):289--291, 1963.

\bibitem{Hor6}
A.~F. Horadam.
\newblock Basic properties of a certain generalized sequence of numbers.
\newblock {\em Fibonacci Quart.}, 3(3):161--176, 1965.

\bibitem{Hor4}
A.~F. Horadam.
\newblock Special properties of the sequence wn (a, b; p, q).
\newblock {\em Fibonacci Quart.}, 5(5):424--434, 1967.

\bibitem{H.K.}
Hesna Kabaday{\i}.
\newblock Homothetic motions with dual octonions in dual $8-$ space.
\newblock {\em Turkish Journal of Mathematics}, 40(1):90--97, 2016.

\bibitem{Kar.Hal.}
Adnan Karata\c{s} and Serpil Hal{\i}c{\i}.
\newblock Horadam octonions.
\newblock {\em arXiv preprint}, arXiv:1611.10143, 2016.

\bibitem{Aslan}
S.~K. Nurkan and I.~A. G{\H u}ven.
\newblock Dual fibonacci quaternions.
\newblock {\em Advances in Applied Clifford Algebras}, 25(2):403--414, 2015.

\bibitem{Polatli}
Emrah Polatli and Seyhun Kesim.
\newblock On quaternions with generalized fibonacci and lucas number
  components.
\newblock {\em Advances in Difference Equations}, 2015.

\bibitem{Swamy}
M.~N.~S. Swamy.
\newblock On generalized fibonacci quaternions.
\newblock {\em Fibonacci Quart.}, 11(5):547--549, 1973.

\bibitem{zafer}
Zafer {\H U}nal, {\H U}.~Tokeser, and G.~Bilgici.
\newblock Some properties of dual fibonacci and dual lucas octonions.
\newblock {\em Advances in Applied Clifford Algebras}, 2016.

\end{thebibliography}

\end{document}